\documentclass{amsart}
\usepackage{graphicx} 
\usepackage{amsfonts,amsmath,tikz,caption}
\usepackage{amssymb}
\usepackage{pb-diagram}
\usepackage[all]{xy}
\usepackage[hidelinks]{hyperref}
\usepackage{xcolor}
\usetikzlibrary{decorations.markings}
\usetikzlibrary{shapes}
\usetikzlibrary{backgrounds}
\usepackage{subfig}
\usepackage{tikz-cd}

\newtheorem{thm}{Theorem}

\newtheorem{lemma}[thm]{Lemma}
\newtheorem{conj}[thm]{Conjecture}

\newtheorem{quest}[thm]{Question}

\theoremstyle{definition}

\theoremstyle{remark}
\newtheorem{rmk}[thm]{Remark}

\numberwithin{thm}{section}

\newcommand{\Z}{\mathbb{Z}}
\newcommand{\N}{\mathbb{N}}

\newcommand{\C}{\mathbb{C}}

\newcommand{\D}[1]{\mathcal{D}_{#1}}

\DeclareMathOperator{\homol}{H}

\newcounter{samcomments}

\newcounter{yashcomments}

\title{A note on normal generation and the first $\ell^2$-Betti number}
\begin{document}

\author{Sam P.~Fisher}
\address{Instituto de Ciencias Matem\'aticas, CSIC-UAM-UC3M-UCM, Madrid, Spain}
\email[S.~P.~Fisher]{samuel.fisher@icmat.es}

\author{Yash Lodha}
\address{Department of Mathematics\\ Purdue University\\ West Lafayette, Indiana, USA}
\email[Y.~Lodha]{ylodha@purdue.edu}

\date{\today}

\begin{abstract}
In $2011$, Osin and Thom conjectured that the first $\ell^2$-Betti number of a torsion-free discrete group is bounded above by the normal rank of the group minus one.
The conjecture has surprising consequences for some fundamental problems in group theory and topology. These include the Wiegold problem on perfect groups, the Levin conjecture, the torsion-free case of the Kervaire conjecture, and an important special case of the Whitehead asphericity conjecture.
In this article, we construct for each $n\in \N$ a countable torsion-free group $\Gamma_n$ such that $\beta^{(2)}_1(\Gamma_n)=n$ and so that the normal rank, $n(\Gamma_n)$, equals one. This disproves the conjecture. Our counterexamples are locally free and hence locally indicable. However, they are not finitely generated.
\end{abstract}

\maketitle

\section{Introduction}

The rank of a group $G$, or $d(G)$, is the smallest cardinality of a generating set.
The normal rank, or $n(G)$, is the smallest size of a subset of $G$ whose normal closure is $G$.
The normal rank is a classical notion that remains highly mysterious and especially hard to estimate from below.
The inequality $d(G/[G,G])\leqslant n(G)$ provides a natural lower bound, and few results have provided better lower bounds in the case when $G$ is perfect.
Motivated by the inequality $\beta^{(2)}_1(G)\leqslant d(G)-1$ for finitely generated groups, Osin and Thom conjectured the following \cite[Conjecture 1.3]{OsinThom2013}:

\begin{conj}[Osin--Thom]\label{ConjOsinThom}
If $G$ is torsion-free and discrete, then $\beta^{(2)}_1(G)\leqslant n(G)-1$.
\end{conj}

In the same article, Osin and Thom established the conjecture for limits of amenable left orderable groups.
In addition to a possible deep connection between $\ell^{2}$-cohomology and normal rank, the conjecture has some surprising consequences for several central problems in group theory and topology.
Recall that for an infinite group $G$ the following holds \cite{Luck2002}: $$\beta_1^{(2)}(G\star G)=2\beta_1^{(2)}(G)+1\qquad \beta_1^{(2)}(G\star \Z)=\beta_1^{(2)}(G)+1.$$
So if Conjecture \ref{ConjOsinThom} were true, it would imply that: $$n(G\star G)\geqslant \beta_1^{(2)}(G\star G)+1\geqslant 2.$$ for any torsion-free discrete group $G$. The Levin conjecture \cite{Levin1962} is closely related to the statement that $n(G\star G)\geqslant 2$ for $G$ torsion-free, which remains a longstanding open problem \cite{ChenLodha2025}. 

Moreover, if true, Conjecture \ref{ConjOsinThom} provides a solution to the Wiegold problem on perfect groups which asks if there exists a finitely generated perfect group $G$ with $n(G)>1$ \cite{ChenLodha2025}. A solution follows from Conjecture \ref{ConjOsinThom} upon choosing such a $G$ to be nontrivial, finitely generated, perfect, and torsion-free. The Wiegold problem was recently solved by the second author and Chen \cite{ChenLodha2025}, who proved that $n(G\star G)>1$ when $G$ is left orderable. 

The Kervaire conjecture asserts that if $A$ is a nontrivial group, then $n(A\star \Z)>1$. This has been settled for the class of torsion-free groups by Klyachko \cite{Klyachko1993} (see \cite{Chen2026} for a recent new proof). Notably, Conjecture \ref{ConjOsinThom} supplies an alternative proof of this result of Klyachko.
The Whitehead asphericity conjecture asserts that given any connected and aspherical $2$-complex $X$, any connected subcomplex $Y$ of $X$ is also aspherical.
Howie showed that the compact case of the Whitehead asphericity conjecture is equivalent to the following statement \cite{Howie1983}.
Let $X=Y\cup_fD^2$ where $D^2$ is a $2$-cell attached along $f\colon \partial D^2\to Y^{(1)}$ to obtain $X$. Then the conjecture asserts that if $X$ is contractible, then $Y$ is aspherical. 
Indeed, in this case it follows that $n(\pi_1(Y))=1$ since attaching the $2$-cell $D$ makes it simply connected. Strikingly, Conjecture \ref{ConjOsinThom} provides a positive solution in this case when $\pi_1(Y)$ is torsion-free, upon applying a result of Berrick and Hillman \cite{BerrickHillman2008} who showed that $\beta^{(2)}_1(\pi_1(Y))=0$ implies that such a complex $Y$ is aspherical. 

In this article, we disprove the conjecture and show the following.

\begin{thm}\label{thm:main-thm}
For each $n\in \N$, there is a countable torsion-free group $\Gamma_n$ such that $\beta^{(2)}_1(\Gamma_n)=n$ and $n(\Gamma_n)=1$. Moreover, each $\Gamma_n$ is locally free (and hence locally indicable).
\end{thm}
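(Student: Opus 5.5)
The plan is to build $\Gamma_n$ as an ascending union of free groups of rank $n+1$,
\[
G_0\hookrightarrow G_1\hookrightarrow G_2\hookrightarrow\cdots,\qquad G_k\cong F_{n+1},
\]
in which a single element $g\in G_0$ eventually normally generates everything. Local freeness is then automatic, since every finitely generated subgroup of $\Gamma_n=\bigcup_k G_k$ lies in some $G_k$ and is therefore free. Torsion-freeness, local indicability and countability follow at once.

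The construction goes as follows. Fix a basis $a,b_1,\dots,b_n$ of $G_0$ and put $g=a$. Inductively, suppose $G_k$ has been built and contains (the image of) $a$. The normal closure $\langle\langle a\rangle\rangle_{G_{k+1}}$ of $a$ in a fresh copy $G_{k+1}\cong F(a,c_1,\dots,c_n)$ is an infinitely generated free group. Hence it contains free subgroups of rank $n+1$ that contain $a$ as a basis element, for instance
\[
H=\langle a,\ c_1ac_1^{-1},\ \dots,\ c_nac_n^{-1}\rangle .
\]
That $H$ is free on these $n+1$ elements follows from a Nielsen/ping-pong check, or from a Stallings folding of the obvious graph. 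Define $\varphi_k\colon G_k\to G_{k+1}$ as the isomorphism onto $H$ that sends $a\mapsto a$ and the remaining basis elements to the conjugates of $a$. Then $\varphi_k(G_k)\subseteq\langle\langle a\rangle\rangle_{G_{k+1}}$. In the direct limit every element of every $G_k$ therefore lies in the normal closure of $a$, so $n(\Gamma_n)=1$ (note $\Gamma_n\neq 1$).

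The substantive part is computing $\beta^{(2)}_1(\Gamma_n)=n$. I would use the fact that group homology commutes with directed colimits: $H_1(\Gamma_n;\mathcal N(\Gamma_n))=\operatorname{colim}_k H_1(G_k;\mathcal N(\Gamma_n))$, where $G_k$ acts through the inclusion. I would combine this with the continuity of Lück's dimension function under directed colimits, as in Lück's treatment of directed unions. This gives
\[
\beta^{(2)}_1(\Gamma_n)=\lim_{i}\lim_{j\geq i}\dim_{\mathcal N(\Gamma_n)}\operatorname{im}\bigl(H_1(G_i;\mathcal N(\Gamma_n))\to H_1(G_j;\mathcal N(\Gamma_n))\bigr).
\]
For a free group $G_i$ acting on its Cayley tree $T_i$, $H_1(G_i;\mathcal N(\Gamma_n))$ is the module of $1$-cycles of $\mathcal N(\Gamma_n)\otimes_{\Z G_i}C_*(T_i)$. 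By induction and the Euler characteristic, its dimension is $-\chi(G_i)+\beta_0^{(2)}=n+1-1=n$, because $\Gamma_n$ is infinite.

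The key point is that each transition map is injective. The inclusion $G_i\le G_j$ of free groups is realised by a $G_i$-equivariant embedding of a (subdivided) Cayley tree of $G_i$ into the tree of $G_j$. The induced map on cellular chains of these graphs is injective, and it stays injective after inducing up to $\mathcal N(\Gamma_n)$, because $\mathcal N(\Gamma_n)\otimes_{\Z G_i}-$ of a free summand inclusion is injective. Since the graphs have no $2$-cells, $H_1$ is just the cycle module, so injectivity passes to $H_1$. Every image then has dimension exactly $n$, and the limit equals $n$.

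I expect the main obstacle to be this last step. One has to verify carefully that the chain-level inclusion for $G_i\le G_j$ is split-injective as a map of $\Z G_i$-modules. After subdivision, the edges of $T_i$ map to paths, so I would realise the map through a core-graph immersion and a subtree inclusion. One must also justify the colimit formula for $\dim$, which needs cofinality and additivity of the extended dimension function. If the direct chain argument becomes messy, I would fall back on Lück's theorem on $\ell^2$-Betti numbers of directed unions, which gives the upper bound $\le\liminf\beta^{(2)}_1(G_k)=n$. The lower bound would then come from the injectivity above applied only to $H_1(G_0;\mathcal N(\Gamma_n))\hookrightarrow H_1(\Gamma_n;\mathcal N(\Gamma_n))$.
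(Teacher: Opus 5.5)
Your construction is the paper's, with $c_iac_i^{-1}$ in place of $s_{m+1,i}^{-1}a_{m+1}s_{m+1,i}$. Normal generation, local freeness and the upper bound $\beta_1^{(2)}(\Gamma_n)\leqslant n$ are all fine.

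\textbf{The gap.} The problem is the step you call the key point: injectivity of $H_1(G_i;\mathcal N(\Gamma_n))\to H_1(G_j;\mathcal N(\Gamma_n))$. You apply $\mathcal N(\Gamma_n)\otimes_{\Z G_i}-$ to both $C_*(T_i)$ and $C_*(T_j)$. But $C_*(T_j)$ is also a free $\Z G_i$-resolution of $\Z$, so $\mathcal N(\Gamma_n)\otimes_{\Z G_i}C_*(T_j)$ still computes $H_*(G_i;\mathcal N(\Gamma_n))$. What you have shown injective is only a comparison map between two models of the \emph{same} homology.

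The actual transition map is that comparison followed by the quotient $\mathcal N(\Gamma_n)\otimes_{\Z G_i}C_*(T_j)\to\mathcal N(\Gamma_n)\otimes_{\Z G_j}C_*(T_j)$. On $1$-chains, this quotient collapses a direct sum of copies of $\mathcal N(\Gamma_n)^{n+1}$, indexed by $G_i\backslash G_j$, onto a single copy. That is exactly where injectivity can fail, and your argument says nothing about its behaviour on cycles.

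A symptom is that your argument never uses the form of $\varphi_k$. It would therefore prove that every inclusion of free groups is injective on first $\ell^2$-homology, which is false. Take $H$ of index $2$ in $K=F(x,y)\leqslant F(x,y,z)$. Then $H\cong F_3$ and $\dim H_1(H;\mathcal N)=2$, but the map to $H_1(F(x,y,z);\mathcal N)$ factors through $H_1(K;\mathcal N)$, which has dimension $1$. Your fallback does not avoid this, since its lower bound relies on the same injectivity.

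\textbf{What is needed.} You need a computation specific to $\varphi_k$; this is the content of the paper's Lemma~\ref{lem:L2-injectivity}.
\begin{itemize}
\item Use the rose resolutions $0\to(\Z G_k)^{n+1}\to\Z G_k\to\Z\to 0$. By Fox calculus, a chain map lifting $\varphi_k$ is $e_a\mapsto e_a$ and $e_{b_i}\mapsto c_i\,e_a+(1-c_iac_i^{-1})\,e_{c_i}$.
\item After tensoring with $\mathcal N(\Gamma_n)$, this matrix is triangular. Its diagonal entries are $1$ and $1-c_iac_i^{-1}$, which are nonzero elements of the group ring of an infinite cyclic subgroup, hence non-zero-divisors in $\mathcal N(\Gamma_n)$.
\item So the map on $1$-chains is injective. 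Since $H_1$ is the module of $1$-cycles for these one-dimensional complexes, the map on $H_1$ is injective too.
\end{itemize}

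The paper does this over the Linnell division ring $\D{\Gamma_n}$, where it is plain linear algebra: the images of the basis are visibly independent, and equal dimensions upgrade injectivity to an isomorphism. It then computes $\beta_1^{(2)}(\Gamma_n)$ as the $\D{\Gamma_n}$-rank of the colimit, using Linnell's theorem and the directed-union lemma, and so avoids L\"uck's extended dimension and its continuity properties. Once this lemma is in place, your $\mathcal N$-valued route through L\"uck's colimit formula also goes through.
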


The construction is as follows.
Fix an integer $n\in \N$. For each $m\in \N$, let $F_{n+1}^{(m)}$ be the free group freely generated by $\{a_m,s_{m,1},\ldots, s_{m,n}\}$.
We define an injective homomorphism $\phi_m\colon F_{n+1}^{(m)}\to F_{n+1}^{(m+1)}$ by: 
\[a_m\mapsto a_{m+1}\qquad s_{m,i}\mapsto s_{m+1,i}^{-1} a_{m+1} s_{m+1,i}.\] 
This map is injective since the kernel of the retraction from $F_{n+1}^{(m+1)}$ to the free subgroup $K_n=\langle s_{m+1,1},\ldots, s_{m+1,n}\rangle$
is precisely the free product of the groups $\{w^{-1} \langle a_{m+1}\rangle w\mid w\in K_n\}$.
The group $\Gamma_n$ is then defined as the directed union $\bigcup_{m\in \N} F_{n+1}^{(m)}$.
Let $a\in \Gamma_n$ be the image of each $a_m\in F_{n+1}^{(m)}$. It follows immediately from the construction that $a$ normally generates $\Gamma_n$.
It is also immediate that $\Gamma_n$ is locally free and hence locally indicable.
In this article, we prove that $\beta_1^{(2)}(\Gamma_n)=n$.
Since our groups are not finitely generated, this leaves open the following question:

\begin{quest}
Does the Osin--Thom conjecture hold for finitely generated torsion-free groups?
\end{quest}

The authors wish to emphasize that the addition of the finite generation hypothesis in the Osin--Thom conjecture makes the problem much more interesting. In particular, the consequences towards the group theory and topology problems described above essentially already follow from the finitely generated version. Indeed, the authors believe that the reformulated problem is an important and difficult open problem which deserves the attention of researchers in the area.

\subsection{Acknowledgments}

The first author is supported by the grants CEX2023-001347-S and EUR2025-164928 of the Ministry of Science, Innovation, and Universities of Spain.
The second author is supported by the NSF CAREER award DMS-2552707. The construction and the proofs are due to the authors, but the second author acknowledges the use of ChatGPT to aid in developing background and intuition for the calculations of $\ell^2$-Betti numbers which were helpful.

\section{Preliminaries}

We give a very brief review of group homology, and refer the reader to \cite{Brown1982} for more details. Let $G$ be a group, and consider a free resolution of the trivial $\C[G]$-module $\C$:
\[\dots\rightarrow P_2\rightarrow P_1\rightarrow P_0\rightarrow \C\rightarrow 0.\]
If $M$ is a right $\C[G]$ module, tensoring the resolution with $M$ produces the following chain complex:
\[\dots \rightarrow M\otimes_{\C[G]}P_2\rightarrow M\otimes_{\C[G]}P_1\rightarrow M\otimes_{\C[G]}P_0\to 0.\]
The degree $n$ homology of this chain complex is the degree $n$ \emph{homology of $G$ with coefficients in $M$}, and denoted by $\homol_n(G;M)$.
Up to isomorphism, the homology is independent of the choice of free resolution.

We introduce a convenient choice of a free resolution in order to prove Lemma \ref{lem:direct-union-homology} below; this is the so-called \emph{bar resolution}.
For each
$n\geqslant 0$, let $B_n(G)$ be the free left $\mathbb{C}[G]$-module with
basis
\[
    \bigl\{[h_1\mid \cdots \mid h_n]\mid h_1,\ldots,h_n\in G\bigr\}.
\]
For $n=0$, the unique basis element is simply denoted by $[\ ]$, so that
$B_0(G)\cong \mathbb{C}[G]$. The boundary map $\partial_n \colon B_n(G) \to B_{n-1}(G)$ for $n \geqslant 1$ is defined by
\[
\begin{aligned}
\partial_n [h_1\mid\cdots\mid h_n]
={}&h_1[h_2\mid\cdots\mid h_n]\\
&+\sum_{j=1}^{n-1}(-1)^j
 [h_1\mid\cdots\mid h_jh_{j+1}\mid\cdots\mid h_n]\\
&+(-1)^n[h_1\mid\cdots\mid h_{n-1}].
\end{aligned}
\]

Together with the augmentation map $\varepsilon\colon B_0(G)=\mathbb{C}[G]\to \mathbb{C}$,
this gives a free resolution
\[
    \cdots\to B_2(G)\to B_1(G)
    \to B_0(G)\to \mathbb{C}
    \to 0
\]
of the trivial left $\mathbb{C}[G]$-module $\mathbb{C}$.
If $M$ is a right $\mathbb{C}[G]$-module, we define the chain complex
\[
    C_\bullet(G;M) := M\otimes_{\mathbb{C}[G]}B_\bullet(G)
\]
and thus
\[
    \homol_n\bigl(C_\bullet(G;M)\bigr)=\homol_n(G;M).
\]
There is a natural identification
\[
    C_n(G;M)
    \cong
    \bigoplus_{(h_1,\ldots,h_n)\in G^n}
    M[h_1\mid\cdots\mid h_n].
\]
Under this identification, the boundary map is
\[
\begin{aligned}
\partial_n\bigl(m[h_1\mid\cdots\mid h_n]\bigr)
={}&mh_1[h_2\mid\cdots\mid h_n]\\
&+\sum_{j=1}^{n-1}(-1)^j
m[h_1\mid\cdots\mid h_jh_{j+1}\mid\cdots\mid h_n]\\
&+(-1)^n m[h_1\mid\cdots\mid h_{n-1}].
\end{aligned}
\]

The following lemma is completely standard (it is \cite[Exercise 3(a), Section V.5]{Brown1982}); we include a sketch proof for convenience.

\begin{lemma}\label{lem:direct-union-homology}
Suppose $G_0\leqslant G_1\leqslant \ldots $ are groups, let $G=\bigcup_{i\in \N}G_i$, and let $M$ be a fixed right $\C[G]$-module.
Then $\homol_n(G;M)=\varinjlim_{i\in\N} \homol_n(G_i;M)$, where $M$ is viewed as a $\C[G_i]$-module via restriction.
\end{lemma}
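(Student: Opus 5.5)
The plan is to work directly with the bar resolution introduced above. The key observation is that the bar complex of $G$ is the increasing union of the bar complexes of the $G_i$, and that direct limits commute with homology.

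\textbf{Step 1: the chain complexes form a union.} For each $i$, the inclusion $G_i\hookrightarrow G$ induces a chain map
\[
C_\bullet(G_i;M)\to C_\bullet(G;M), \qquad m[h_1\mid\cdots\mid h_n]\mapsto m[h_1\mid\cdots\mid h_n].
\]
Using the identification $C_n(G_i;M)\cong\bigoplus_{(h_1,\ldots,h_n)\in G_i^n}M[h_1\mid\cdots\mid h_n]$, this map is simply the inclusion of the summands indexed by $G_i^n\subseteq G^n$, so it is injective. It commutes with the boundary maps because the boundary formula involves only the products $h_jh_{j+1}$ and the right action of $h_1$ on $M$. Both of these are the same whether computed in $G_i$ or in $G$, and $M$ carries the restricted action. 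The inclusions $G_i\leqslant G_{i+1}$ give compatible chain maps, so we obtain a directed system of subcomplexes of $C_\bullet(G;M)$.

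\textbf{Step 2: the union is everything.} Every basis element $[h_1\mid\cdots\mid h_n]$ involves only finitely many group elements. Since the chain $G_0\leqslant G_1\leqslant\cdots$ is increasing with union $G$, all of $h_1,\ldots,h_n$ lie in some single $G_i$. An arbitrary element of $C_n(G;M)$ is a finite sum of terms $m[h_1\mid\cdots\mid h_n]$, so it lies in some $C_n(G_i;M)$. Hence
\[
C_\bullet(G;M)=\bigcup_i C_\bullet(G_i;M)=\varinjlim_i C_\bullet(G_i;M)
\]
as chain complexes.

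\textbf{Step 3: pass homology through the limit.} Filtered direct limits of vector spaces (or modules) are exact, so homology commutes with them:
\[
\homol_n(G;M)=\homol_n\bigl(\varinjlim_i C_\bullet(G_i;M)\bigr)\cong\varinjlim_i\homol_n\bigl(C_\bullet(G_i;M)\bigr)=\varinjlim_i\homol_n(G_i;M).
\]
This exactness can also be checked by hand. A cycle in $C_n(G;M)$ lies in some $C_n(G_i;M)$ by Step 2, and it is a cycle there because the inclusion is injective and a chain map; this gives surjectivity. If the image of a class $[z]\in\homol_n(G_i;M)$ vanishes in $\homol_n(G;M)$, then $z=\partial c$ for some $c\in C_{n+1}(G;M)$. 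That $c$ lies in some $C_{n+1}(G_j;M)$ with $j\geqslant i$, so $[z]$ already dies in $\homol_n(G_j;M)$; this gives injectivity.

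\textbf{Remaining point.} The only step that needs care is justifying that $C_\bullet(G_i;M)$, with $M$ viewed as a $\C[G_i]$-module by restriction, computes $\homol_\bullet(G_i;M)$. This holds because $B_\bullet(G_i)$ is a free resolution of $\C$ over $\C[G_i]$, so the homology of $C_\bullet(G_i;M)=M\otimes_{\C[G_i]}B_\bullet(G_i)$ is $\homol_\bullet(G_i;M)$ by definition and the independence of resolution stated above.
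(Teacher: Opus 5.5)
Your proposal is correct and follows the paper's approach. Both realize $C_\bullet(G;M)$ as the increasing union of the bar subcomplexes $C_\bullet(G_i;M)$, then pass homology through the directed union by noting that each cycle, and each chain it bounds, already lies at some finite stage; you just add detail the paper's sketch leaves implicit, namely injectivity and chain-map compatibility of the inclusions and why a cycle of $C_n(G;M)$ is already a cycle of $C_n(G_i;M)$.
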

\begin{proof}[Sketch proof]
    By definition of the bar resolution, there are natural inclusions
    \[
        C_\bullet(G_0;M) \subseteq C_\bullet(G_1;M) \subseteq \cdots \subseteq C_\bullet(G;M),
    \]
    and $C_\bullet(G;M) = \bigcup_{i \in \N} C_\bullet(G_i;M)$. Then
    \[
        \homol_n(G;M) \cong \homol_n\bigl(C_\bullet(G;M)\bigr) \cong \varinjlim_{i\in\N} \homol_n\bigl(C_\bullet(G_i;M)\bigr) \cong \varinjlim_{i\in\N} \homol_n(G_i;M).
    \]
    The second isomorphism is the standard fact that the homology of a directed union of chain complexes is the directed limit of the corresponding homologies. To see this, observe that every cycle $z \in C_n(G;M)$ lies in $C_n(G_i;M)$ for some $i \in \N$, and moreover if $z = \partial_{n+1} c$ for some $c \in C_{n+1}(G;M)$, then again there is some $j \in \N$ (possibly greater than $i$) such that $c \in C_{n+1}(G_j;M)$. \qedhere
\end{proof}

\begin{rmk}
    The same result holds with directed unions of groups replaced by directed limits of groups, and the proof is similar. However, we will only need the directed union version of the statement.
\end{rmk}

Next, we recall some basics from the theory of $\ell^2$-invariants, and refer the reader to \cite{Luck2002} for more details. Let $G$ be a countable group. 
We consider the usual Hilbert space $\ell^2(G)$ with an orthonormal basis $\{\delta_g\mid g\in G\}$. The left regular representation is defined on basis elements by
\[
    \lambda(g)\delta_h=\delta_{gh} \qquad \text{for all} \ g,h \in G
\]
and extended to $\ell^2(G)$ by linearity. This defines an injective homomorphism $\lambda \colon \C[G] \to \mathcal B(\ell^2(G))$, where $\mathcal B(\ell^2(G))$ is the algebra of bounded operators on $\ell^2(G)$ acting on the right. The \emph{group von Neumann algebra} of $G$ is $\mathcal{N}(G)=\overline{\lambda(\C[G])}^{\textup{WOT}}=\lambda(G)''$, where the closure is taken with respect to the weak operator topology on $\mathcal B(\ell^2(G))$ and $\lambda(G)''$ is the bicommutant of $\lambda(G)$.

The \emph{ring of operators affiliated to $\mathcal{N}(G)$}, denoted by $\mathcal{U}(G)$, is the Ore localisation of the von Neumann algebra $\mathcal{N}(G)$ at its set of non-zero divisors. The fact that this is an Ore set is due to Berberian \cite{Berberian_vonNeumannOre}. We have the containments $\C[G]\subseteq \mathcal{N}(G)\subseteq\mathcal{U}(G)$ of rings.
L\"uck \cite[Section 8]{Luck2002} showed that there is a well-defined dimension function $\dim_{\mathcal U(G)}$ defined on all $\mathcal U(G)$-modules. The \emph{$\ell^2$-Betti numbers} of $G$ are then given by 
\[
    \beta_i^{(2)}(G) := \dim_{\mathcal U(G)} \homol_i(G;\mathcal U(G)).
\]
For a torsion-free group $G$, the \emph{Strong Atiyah Conjecture} is the statement that $\dim_{\mathcal U(G)}(\mathcal U(G) \otimes_{\C[G]} M) \in \Z$ for all finitely presented left $\C[G]$-modules $M$. 

The definition of $\dim_{\mathcal U(G)}$ is somewhat involved, but thankfully in our situation a conceptually simpler definition of $\ell^2$-Betti numbers is available.
Let $S$ be a unital ring. A subring $D \subseteq S$ is \emph{division closed} if $x \in D$ is invertible in $S$ implies $x^{-1} \in D$. Suppose $R \subseteq S$ is a (unital) subring. The \emph{division closure} of $R$ in $S$ is the smallest division closed subring of $S$ that contains $R$. We define the \emph{Linnell ring} $\D{G}$ of a countable group $G$ to be the division closure of $\C[G]$ in $\mathcal U(G)$. In general $\D{G}$ may not be a division ring (for instance when $G$ has torsion). Linnell \cite{Linnell1993} showed that when $G$ is torsion-free, the assertion that $\D{G}$ is a division ring is equivalent to $G$ satisfying the Strong Atiyah Conjecture. Also in \cite{Linnell1993}, Linnell showed that finite rank free groups satisfy the Strong Atiyah Conjecture, and therefore so does any countable locally free group (the Strong Atiyah Conjecture is easily seen to be stable under directed unions of torsion-free groups satisfying it). We thus have the following theorem, due to Linnell, which we will use in our computations.

\begin{thm}[{\cite{Linnell1993}}]\label{thm:Linnell-Atiyah}
Let $G$ be a countable locally free group. The following holds:
\begin{enumerate}
\item The group $G$ satisfies the Strong Atiyah Conjecture, and hence $\mathcal{D}_G$ is a division ring.
\item For each $n\in \N$, we have $\beta_n^{(2)}(G)= \dim_{\mathcal{D}_G}\homol_n(G;\mathcal{D}_G)$.
\end{enumerate}
\end{thm}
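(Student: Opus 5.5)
Part (1) reduces to Linnell's theorem for finitely generated free groups via a directed-union argument on division closures. Part (2) then follows from flatness over a division ring together with the basic properties of L\"uck's dimension function.

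\textbf{Step 1: reduce to finitely generated subgroups.} Write $G=\bigcup_i G_i$ as an increasing union of finitely generated subgroups. Since $G$ is locally free, each $G_i$ is a free group of finite rank. The starting input is Linnell's result \cite{Linnell1993}: each $\D{G_i}$ is a division ring, equivalently $G_i$ satisfies the Strong Atiyah Conjecture.

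\textbf{Step 2: compatibility of division closures with inclusions.} For $H\leqslant G$ there is a natural embedding $\mathcal U(H)\hookrightarrow\mathcal U(G)$ extending $\C[H]\subseteq \C[G]$. The key point is that $\mathcal U(H)$ is division closed in $\mathcal U(G)$: an element of $\mathcal U(H)$ invertible in $\mathcal U(G)$ is already invertible in $\mathcal U(H)$. I would verify this using the decomposition of $\ell^2(G)$ into copies of $\ell^2(H)$ indexed by cosets $Hg$. Under this decomposition, an operator affiliated to $\mathcal N(H)$ is injective with dense image on $\ell^2(G)$ exactly when it is so on $\ell^2(H)$. It follows that the division closure of $\C[H]$ in $\mathcal U(H)$ coincides with its division closure in $\mathcal U(G)$, so $\D{H}\subseteq \D{G}$.

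\textbf{Step 3: $\D{G}$ is the union of the $\D{G_i}$.} Consider $D:=\bigcup_i \D{G_i}\subseteq \mathcal U(G)$.
\begin{enumerate}
\item $D$ is an increasing union of division rings, hence a division ring, and it contains $\C[G]=\bigcup_i\C[G_i]$.
\item $D$ is division closed in $\mathcal U(G)$. Any nonzero element of $D$ lies in some $\D{G_i}$, so it is invertible there, and its inverse is the inverse in $\mathcal U(G)$. An element of $D$ that is invertible in $\mathcal U(G)$ is nonzero, so its inverse lies in $D$.
\item $D\subseteq \D{G}$ by Step 2.
\end{enumerate}
By minimality of the division closure, $\D{G}=D$, which is a division ring. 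Linnell's equivalence then gives the Strong Atiyah Conjecture for $G$, proving (1).

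\textbf{Step 4: prove (2).} Every right module over the division ring $\D{G}$ is free, so $\mathcal U(G)$ is flat as a right $\D{G}$-module. Since $\mathcal U(G)\otimes_{\D G}-$ commutes with homology of the bar complex, we obtain
\[
\homol_n(G;\mathcal U(G))\cong \mathcal U(G)\otimes_{\D{G}}\homol_n(G;\D{G}).
\]
Choose a basis of $\homol_n(G;\D{G})$ indexed by a set $I$. Then $\homol_n(G;\mathcal U(G))\cong \mathcal U(G)^{(I)}$. We have $\dim_{\mathcal U(G)}\mathcal U(G)=1$, and L\"uck's dimension is additive and cofinal, i.e.\ it commutes with directed unions of submodules. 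Hence $\dim_{\mathcal U(G)}\mathcal U(G)^{(I)}=|I|$, with the value $\infty$ when $I$ is infinite. This equals $\dim_{\D{G}}\homol_n(G;\D{G})$, proving (2).

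\textbf{Main obstacle.} The delicate point is Step 2, the division closedness of $\mathcal U(H)$ inside $\mathcal U(G)$. If that argument proves troublesome, I would instead cite \cite[Chapter 8 and Chapter 10]{Luck2002}, where induction from $\mathcal U(H)$ to $\mathcal U(G)$ is shown to be faithfully flat and dimension-preserving.
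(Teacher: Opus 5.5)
Your argument is correct, but it runs in the opposite order from the paper's. The paper gives no real proof. It cites Linnell for finitely generated free groups and remarks that the Strong Atiyah Conjecture is ``easily seen'' to pass to directed unions of torsion-free groups. That is a dimension-level argument: a finitely presented $\C[G]$-module is induced from some $\C[G_i]$, and induction from $G_i$ to $G$ preserves von Neumann dimension. The paper then invokes Linnell's equivalence to conclude that $\D{G}$ is a division ring, and it simply asserts part (2). This route uses the harder direction of the equivalence, namely that the Strong Atiyah Conjecture forces the division closure to be a skew field (usually proved via the rational closure and a rank argument).

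You instead pass the division-ring property itself through the union, showing $\D{G}=\bigcup_i\D{G_i}$. The only analytic input is that $\mathcal U(H)$ is division closed in $\mathcal U(G)$, which your coset decomposition handles correctly, so the fallback citation is not needed. You combine this with the elementary fact that division closures taken in a division-closed intermediate ring agree. You then need only the easy direction of the equivalence: if $\D{G}$ is a division ring, then $\mathcal U(G)\otimes_{\C[G]}M$ is free of finite rank for finitely presented $M$. Your Step 4 also supplies the flatness argument for (2) that the paper leaves implicit.

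The paper's formulation is shorter. Yours is self-contained modulo Linnell's theorem for free groups, avoids the harder direction of the equivalence, and gives the explicit description $\D{G}=\bigcup_i\D{G_i}$.
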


The dimension $\dim_{\mathcal D_G}$ is simply the rank of $\homol_n(G;\mathcal{D}_G)$ as a $\D{G}$-module (this is well defined, because all modules over division rings are free of unique rank). Computations using $\D{G}$ (instead of $\mathcal U(G)$) are easier since much of standard linear algebra can be carried out for modules over division rings, as in the proof of the following standard result.

\begin{lemma}\label{lem:L2-Betti-free-groups}
    Let $F_n$ be a finitely generated free group of rank $n>0$. If $\mathcal D$ is a division ring containing $\C[F_n]$ as a subring, then 
    \[
        \dim_{\mathcal D} \homol_i(G;\mathcal D) = \begin{cases}
            n-1 & \text{if} \ i = 1, \\
            0 & \text{otherwise.}
        \end{cases}
    \]
\end{lemma}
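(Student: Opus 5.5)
We use the standard free resolution of $\C$ over $\C[F_n]$ coming from the presentation complex of $F_n$, i.e.\ the cellular chain complex of the universal cover of a wedge of $n$ circles (the Cayley tree). (The statement writes $\homol_i(G;\mathcal D)$ for $\homol_i(F_n;\mathcal D)$.) Let $x_1,\ldots,x_n$ be free generators. The sequence
\[
0\to \C[F_n]^n \xrightarrow{\ \partial_1\ } \C[F_n] \xrightarrow{\ \varepsilon\ } \C\to 0,
\qquad \partial_1(e_j)=x_j-1,
\]
is exact. Exactness at $\C[F_n]^n$ follows because the tree is contractible, or because the augmentation ideal of a free group is free on the $x_j-1$ (Fox calculus). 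Tensoring with $\mathcal D$, regarded as a right $\C[F_n]$-module via the inclusion $\C[F_n]\subseteq \mathcal D$, gives the complex
\[
0\to \mathcal D^n \xrightarrow{\ d\ } \mathcal D\to 0,
\qquad d(y_1,\ldots,y_n)=\sum_j y_j(x_j-1).
\]
Hence $\homol_i(F_n;\mathcal D)=0$ for $i\geqslant 2$, $\homol_1(F_n;\mathcal D)=\ker d$, and $\homol_0(F_n;\mathcal D)=\operatorname{coker} d$.

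Since $\mathcal D$ is a division ring, rank--nullity holds for right-module maps of left $\mathcal D$-vector spaces, so it suffices to show that $d$ is surjective. Because $n>0$, the element $x_1-1$ is nonzero in $\C[F_n]$: $x_1$ has infinite order, so $x_1\neq 1$. It therefore remains nonzero in $\mathcal D$, since $\C[F_n]$ is a subring, and so it is invertible in the division ring $\mathcal D$. Thus $d(0,\ldots,0,y(x_1-1)^{-1},\ldots)$, with the nonzero entry in the first coordinate, equals $y$, and $d$ is onto. Consequently $\operatorname{coker} d=0$, giving $\dim \homol_0=0$, and $\dim\ker d = n-1$, giving $\dim \homol_1=n-1$.

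The only step needing care is exactness of the length-one resolution, i.e.\ injectivity of $\partial_1$ over $\C[F_n]$. For this we would simply cite the contractibility of the Cayley tree of $F_n$ with respect to a free basis, whose cellular chain complex is exactly the displayed resolution (see \cite{Brown1982}). Everything else is linear algebra over a division ring.
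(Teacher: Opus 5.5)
Your proof is correct and takes the same approach as the paper's sketch. You tensor the length-one resolution $0\to\C[F_n]^n\to\C[F_n]\to\C\to 0$ with $\mathcal D$, show that the resulting map $\mathcal D^n\to\mathcal D$ is surjective, and apply rank--nullity over the division ring; you also spell out the surjectivity via invertibility of $x_1-1$ in $\mathcal D$, which the paper leaves implicit.
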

\begin{proof}[Sketch proof.]
    The trivial $\C[F_n]$-module $\C$ admits a free resolution of the form \[0 \to \C[F_n]^n \to \C[F_n] \to \C \to 0.\] After tensoring with $\mathcal D$, this becomes $0 \to \mathcal D^n \to \mathcal D \to 0$. The computation then immediately follows from the rank-nullity theorem (which holds for modules over division rings) and the fact that the map $\mathcal D^n \to \mathcal D$ is nonzero (and therefore surjective). \qedhere
\end{proof}

\section{The first \texorpdfstring{$\ell^2$}{ℓ²}-Betti number computation}

At the end of the introduction, we defined $\Gamma_n$ as the directed union $\bigcup_{m \in \N} F_{n+1}^{(m)}$, where the inclusion maps are denoted by $\phi_m \colon F_{n+1}^{(m)} \to F_{n+1}^{(m+1)}$. By Theorem \ref{thm:Linnell-Atiyah}, the group $\Gamma_n$ satisfies the Strong Atiyah Conjecture and therefore the Linnell ring $\mathcal D_{\Gamma_n}$ is a division ring.

\begin{lemma}\label{lem:L2-injectivity}
    The injection $\phi_m \colon F_{n+1}^{(m)} \to F_{n+1}^{(m+1)}$ induces an isomorphism
    \[
        \homol_1\left(F_{n+1}^{(m)}; \D{\Gamma_n}\right) \to \homol_1\left(F_{n+1}^{(m+1)}; \D{\Gamma_n}\right)
    \]
    of $\D{\Gamma_n}$-modules.
\end{lemma}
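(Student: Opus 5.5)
Both groups are free of rank $n+1$ and $\C[F_{n+1}^{(m)}] \subseteq \C[\Gamma_n] \subseteq \D{\Gamma_n}$, so Lemma \ref{lem:L2-Betti-free-groups} applies to each with $\mathcal D = \D{\Gamma_n}$. Hence both $\homol_1$'s have $\D{\Gamma_n}$-dimension $n$. Since $\D{\Gamma_n}$ is a division ring by Theorem \ref{thm:Linnell-Atiyah}, it suffices to prove that the induced map is injective (or surjective); rank-nullity then gives an isomorphism.

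To compute the map I will use the standard resolution for a free group $F$ on a basis $X$. Then $\homol_1(F;\mathcal D)$ is the kernel of
\[
\mathcal D^{X}\to \mathcal D,\qquad (d_x)_{x\in X}\mapsto \sum_x d_x(x-1).
\]
In Fox-calculus terms, a $1$-chain $\sum_x d_x e_x$ is a cycle iff $\sum d_x(x-1)=0$. The map induced by $\phi_m$ on chains sends $e_x$ to $\sum_y \frac{\partial \phi_m(x)}{\partial y} e_y$; this is the chain map on the standard complexes lifting the identity on $\C$.

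Concretely, write $a=a_{m+1}$ and $s_i=s_{m+1,i}$, and identify both groups with subgroups of $\Gamma_n$. Then $a_m\mapsto a$, and
\[
s_{m,i}\mapsto s_i^{-1}as_i,\qquad \frac{\partial(s_i^{-1}as_i)}{\partial a}=s_i^{-1},\qquad \frac{\partial(s_i^{-1}as_i)}{\partial s_i}=-s_i^{-1}+s_i^{-1}a.
\]
So a cycle $(d_0,d_1,\dots,d_n)$, with $d_0$ the coefficient of $a_m$ and $d_i$ that of $s_{m,i}$, maps to the chain with
\[
a\text{-coefficient } d_0+\sum_i d_is_i^{-1},\qquad s_i\text{-coefficient } d_is_i^{-1}(a-1).
\]

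For injectivity, suppose the image is zero. Then $d_is_i^{-1}(a-1)=0$ for each $i$. Since $a\neq 1$, the element $a-1$ is nonzero in the division ring $\D{\Gamma_n}$ and hence invertible, and $s_i^{-1}$ is a unit, so $d_i=0$ for all $i\geq 1$. The $a$-coefficient then gives $d_0=0$. Thus the map is injective on all of $\mathcal D^{n+1}$, in particular on the cycles, and the two dimensions agree, so it is an isomorphism.

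The main obstacle is bookkeeping rather than mathematics. One must confirm that the map on $\homol_1$ induced by $\phi_m$ really is given by the Fox Jacobian on the standard complexes. It is, since this Jacobian map is a chain map over the inclusion and any two such maps are chain homotopic. One must also fix left/right conventions so that $\mathcal D$, as a right module, multiplies correctly. The key point, that $a-1$ is invertible because $\D{\Gamma_n}$ is a division ring, is immediate from Theorem \ref{thm:Linnell-Atiyah}.
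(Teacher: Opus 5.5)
Your proposal is correct and follows essentially the same route as the paper. Your Fox Jacobian is exactly the paper's chain map $(\phi_m)^{(1)}_*$, namely $f_m\mapsto f_{m+1}$ and $e_{m,i}\mapsto s_{m+1,i}^{-1}(a_{m+1}-1)e_{m+1,i}+s_{m+1,i}^{-1}f_{m+1}$, and your triangular injectivity argument (invertibility of $s_i^{-1}(a-1)$ in the division ring $\D{\Gamma_n}$, followed by the dimension count from Lemma~\ref{lem:L2-Betti-free-groups}) is the paper's linear-independence argument written in coordinates.
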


\begin{rmk}
    Lemma \ref{lem:L2-injectivity} shows that $F_{n+1}^{(i)}$ is \emph{$\ell^2$-independent} in $F_{n+1}^{(i+1)}$, to use the terminology introduced by Antol\'in and Jaikin-Zapirain in \cite{AntolinJaikin_HN}.
\end{rmk}

\begin{proof}[Proof (of Lemma \ref{lem:L2-injectivity})]
    For each $m \in \N$, let 
    \[
        0 \to \C[F_{n+1}^{(m)}] f_m \ \oplus \ \bigoplus_{i=1}^n \C[F_{n+1}^{(m)}] e_{m,i} \to \C[F_{n+1}^{(m)}] \to \C \to 0 \tag{$\dagger_m$}
    \]
    be a free resolution of the trivial $\C[F_{n+1}^{(m)}]$-module $\C$, where $\C[F_{n+1}^{(m)}] \to \C$ is the augmentation homomorphism, and 
    \[
        \C[F_{n+1}^{(m)}] f_m \ \oplus \ \bigoplus_{i=1}^n \C[F_{n+1}^{(m)}] e_{m,i} \to \C[F_{n+1}^{(m)}]
    \]
    is defined by
    \[
        f_m \mapsto a_m - 1 \qquad \text{and} \qquad e_{m,i} \mapsto s_{m,i} - 1.
    \]

    The homomorphism $\phi_m$ will induce a map from the resolution ($\dagger_m)$ to the resolution ($\dagger_{m+1})$ which lifts the identity map on the trivial module $\C$. Such a lift is well-defined up to chain homotopy and induces maps on homology with arbitrary right $\C[F_{n+1}^{(m+1)}]$-module coefficients. We now describe these maps.

    The map of resolutions is of the form
    \[
        \begin{tikzcd}
            0 \arrow[r] & \C[F_{n+1}^{(m)}] f_m \oplus \bigoplus_{i=1}^n \C[F_{n+1}^{(m)}] e_{m,i} \arrow[d, "(\phi_m)^{(1)}_*"]\arrow[r] & \C[F_{n+1}^{(m)}] \arrow[d, "(\phi_m)^{(0)}_*"]\arrow[r] & \C \arrow[d, "\mathrm{id}"] \\
            0 \arrow[r] & \C[F_{n+1}^{(m+1)}] f_{m+1} \oplus \bigoplus_{i=1}^n \C[F_{n+1}^{(m+1)}] e_{m+1,i} \arrow[r] & \C[F_{n+1}^{(m+1)}] \arrow[r] & \C
        \end{tikzcd}
    \]
    All of the maps in the diagram are $\C[F_{n+1}^{(m)}]$-module homomorphisms, where the modules on the bottom row are given a $\C[F_{n+1}^{(m)}]$-module structures via the map $\phi_m$. The vertical map $(\phi_m)^{(0)}_*$ is the usual map on group algebras induced by a group homomorphism; more precisely,
    \[
        (\phi_m)^{(0)}_*\left( \sum_{g \in F_{n+1}^{(m)}} \lambda_g g\right) \ = \  \sum_{g \in F_{n+1}^{(m)}} \lambda_g \phi_m(g).
    \]
    where $\lambda_g \in \C$ for all $g \in G$ and $\lambda_g = 0$ for all but finitely many $g \in G$. The more important homomorphism is $(\phi_m)^{(1)}_*$, which is defined on basis elements by 
    \[
        (\phi_m)^{(1)}_*(f_m) = f_{m+1}
    \]
    and
    \[
        (\phi_m)^{(1)}_*(e_{m,i}) = s_{m+1,i}^{-1}(a_{m+1}-1)e_{m+1,i} + s_{m+1,i}^{-1} f_{m+1}
    \]
    and extended to $\C[F_{n+1}^{(m)}] f_m \oplus \bigoplus_{i=1}^n \C[F_{n+1}^{(m)}] e_{m,i}$ by $\C[F_{n+1}^{(m)}]$-linearity. One easily checks that this definition makes the diagram commute.

    To obtain the induced map on $\D{\Gamma_n}$-homologies, we now tensor the diagram of resolutions with $\D{\Gamma_n}$ to obtain 
    \[
        \begin{tikzcd}
            0 \arrow[r] & \D{\Gamma_n} f_m \oplus \bigoplus_{i=1}^n \D{\Gamma_n} e_{m,i} \arrow[d, "(\phi_m)^{(1)}_*"]\arrow[r] & \D{\Gamma_n} \arrow[d, "(\phi_m)^{(0)}_*"]\arrow[r] & 0 \\
            0 \arrow[r] & \D{\Gamma_n} f_{m+1} \oplus \bigoplus_{i=1}^n \D{\Gamma_n} e_{m+1,i} \arrow[r] & \D{\Gamma_n} \arrow[r] & 0
        \end{tikzcd}
    \]
    where we have continued to use the notation $(\phi_m)^{(0)}_*$ and $(\phi_m)^{(1)}_*$ for the vertical maps, since they are obtained from the old vertical maps by extending coefficients to $\D{\Gamma_n}$ (and thus have the same definition on basis elements).

    It is clear from the definition of $(\phi_m)^{(1)}_*$ that the set 
    \[
        \{ (\phi_m)^{(1)}_*(f_m), \ (\phi_m)^{(1)}_*(e_{m,1}), \ \dots, \ (\phi_m)^{(1)}_*(e_{m,n}) \}
    \]
    is $\D{\Gamma_n}$-linearly independent. Indeed, the set is of the form
    \[
        \{ f_{m+1}, \ \alpha_1 e_{m+1,1} + \beta_1 f_{m+1}, \ \dots, \ \alpha_n e_{m+1,n} + \beta_n f_{m+1} \}
    \]
    for non-zero elements $\alpha_i, \beta_i \in \D{\Gamma_n}$. The elements $\{f_{m+1}, e_{m+1,1}, \dots, e_{m+1,n}\}$ are linearly independent by definition, and therefore so is the set on the previous line.
    
    The upshot of the previous paragraph is that $(\phi_m)^{(1)}_*$ is injective, and therefore an isomorphism since its domain and image are of the same $\D{\Gamma_n}$-dimension. Since the space of $1$-cycles over $\D{\Gamma_n}$ coincides with the degree one $\D{\Gamma_n}$-homology of the involved free groups, we obtain that $(\phi_m)^{(1)}_*$ restricts to an injection
    \[
        (\phi_m)^{(1)}_* \colon \homol_1\left(F_{n+1}^{(m)}; \D{\Gamma_n}\right) \hookrightarrow \homol_1\left(F_{n+1}^{(m+1)}; \D{\Gamma_n}\right).
    \]
    But again, $\homol_1(F_{n+1}^{(m)}; \D{\Gamma_n})$ and $\homol_1(F_{n+1}^{(m+1)}; \D{\Gamma_n})$ have the same $\D{\Gamma_n}$-dimension (by Lemma \ref{lem:L2-Betti-free-groups}) and therefore $(\phi_m)^{(1)}_*$ is an isomorphism. \qedhere
\end{proof}

\begin{lemma}\label{lem:L2-union}
    Let $G = \bigcup_{i \in \N} F^{(i)}$ be an increasing union of free groups, where $F^{(i)}$ is free of fixed rank $n+1$ for all $i \in \N$. If the inclusion induced map
    \[
        \homol_1(F^{(i)}; \D{G}) \to \homol_1(F^{(i+1)}; \D{G})
    \]
    is an isomorphism for all $i \in \N$, then $\beta_1^{(2)}(G) = n$ and $\beta_m^{(2)}(G) = 0$ for all $m \geqslant 2$.
\end{lemma}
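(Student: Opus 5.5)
The plan is to combine Lemma \ref{lem:direct-union-homology} with Theorem \ref{thm:Linnell-Atiyah} and Lemma \ref{lem:L2-Betti-free-groups}. Since $G$ is an increasing union of free groups, it is countable and locally free. By Theorem \ref{thm:Linnell-Atiyah}, $\D{G}$ is a division ring and
\[
    \beta_k^{(2)}(G)=\dim_{\D{G}}\homol_k(G;\D{G})
\]
for every $k$. Apply Lemma \ref{lem:direct-union-homology} with $M=\D{G}$ regarded as a right $\C[G]$-module. This gives
\[
    \homol_k(G;\D{G})\cong\varinjlim_{i}\homol_k(F^{(i)};\D{G}),
\]
where each $\homol_k(F^{(i)};\D{G})$ is a left $\D{G}$-module. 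Here one has to check that the connecting maps are $\D{G}$-linear. This is clear from the bar resolution: $\D{G}$ acts on the left of $\D{G}\otimes_{\C[G]}B_\bullet$, and the inclusions $C_\bullet(F^{(i)};\D{G})\subseteq C_\bullet(F^{(i+1)};\D{G})$ are $\D{G}$-linear. Hence the direct limit is a direct limit of $\D{G}$-modules, and the isomorphism is one of $\D{G}$-modules.

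\textbf{Degree $k\geqslant 2$.} Lemma \ref{lem:L2-Betti-free-groups} applies to $\C[F^{(i)}]\subseteq\C[G]\subseteq\D{G}$ and gives $\homol_k(F^{(i)};\D{G})=0$ for all $i$. A direct limit of zero modules is zero, so $\beta_k^{(2)}(G)=0$.

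\textbf{Degree $1$.} Each $\homol_1(F^{(i)};\D{G})$ has $\D{G}$-dimension $(n+1)-1=n$, again by Lemma \ref{lem:L2-Betti-free-groups}. By hypothesis every connecting map in the system is an isomorphism. A directed system indexed by $\N$ in which all maps are isomorphisms has colimit isomorphic to its first term via the canonical map: the canonical map from the first term is surjective because every element of the colimit comes from some stage and can be pulled back through the isomorphisms, and it is injective because an element dying in the colimit dies at some finite stage, where the composite map is injective. Therefore $\homol_1(G;\D{G})\cong\homol_1(F^{(0)};\D{G})\cong\D{G}^n$, and $\beta_1^{(2)}(G)=n$.

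The only step needing care is compatibility. The isomorphism in the hypothesis, and the maps in Lemma \ref{lem:L2-injectivity}, are computed using the small resolutions $(\dagger_m)$, whereas Lemma \ref{lem:direct-union-homology} uses the bar resolution. These agree because the maps on homology induced by inclusions are independent of the choice of resolution and of the chain map lifting the identity on $\C$, up to the canonical isomorphisms. The identification is also $\D{G}$-linear, since all comparison chain maps are $\C[F^{(i)}]$-linear and $\D{G}$ acts on the coefficients. This naturality check is the main potential obstacle; everything else is formal.
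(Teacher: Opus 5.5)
Your proposal is correct and follows essentially the same route as the paper: identify $\homol_\bullet(G;\D{G})$ with the direct limit via Lemma \ref{lem:direct-union-homology}, kill degrees $\geqslant 2$ with Lemma \ref{lem:L2-Betti-free-groups}, and read off degree one from the connecting maps. The only difference is in degree one. The paper gets $\beta_1^{(2)}(G)\geqslant n$ from injectivity into the colimit, and $\beta_1^{(2)}(G)\leqslant n$ by pushing any $n+1$ elements into a single finite stage of dimension $n$, so it uses only injectivity of the connecting maps; you instead use bijectivity to identify the colimit with its first term outright.
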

\begin{proof}
    By Lemma \ref{lem:direct-union-homology}, we have
    \begin{equation}\label{eq:direct-limit}
        \homol_i(G; \D{G}) \cong \varinjlim_{i \in \N} \homol_i(F^{(i)}; \D{G}). \tag{$\ddagger$}
    \end{equation}
    This immediately implies that $\beta_m^{(2)}(G) = 0$ for all $m \geqslant 2$, since the $\D{G}$-homology of all the free groups $F^{(i)}$ vanishes above degree one (see Lemma \ref{lem:L2-Betti-free-groups}).

    The injectivity on $\D{G}$-homology implies that the inclusion induced map
    \[
        \homol_1(F^{(1)}; \D{G}) \to \homol_1(G; \D{G})
    \]
    is injective. Indeed, if $x \in \homol_1(F^{(1)}; \D{G})$ vanished in $\homol_1(G; \D{G})$, then $x$ would vanish in some $\homol_1(F^{(i)}; \D{G})$ for $i \in \N$ by \eqref{eq:direct-limit}. This implies that $x = 0$ in $\homol_1(F^{(1)}; \D{G})$. We conclude that 
    \[
        \beta_1^{(2)}(G) = \dim_{\D{G}} \homol_1(G;\D{G}) \geqslant \dim_{\D{G}} \homol_1(F^{(1)};\D{G}) = n.
    \]

    To prove the reverse inequality, let $x_1, \dots, x_{n+1} \in \homol_1(G;\D{G})$ be arbitrary elements. By \eqref{eq:direct-limit}, there is some $i \in \N$ such that $x_j \in \homol_1(F^{(i)};\D{G})$ for each $j=1, \dots, n+1$. Since
    \[
        \beta_1^{(2)}(F^{(i)}) = \dim_{\D{G}} \homol_1(F^{(i)};\D{G}) = n,
    \]
    we conclude that the elements $x_1, \dots, x_{n+1}$ are $\D{G}$-linearly dependent. Hence, 
    \[
        \beta_1^{(2)}(G) = \dim_{\D{G}} \homol_1(G;\D{G}) \leqslant n,
    \]
    as claimed. \qedhere
\end{proof}

\begin{proof}[Proof of Theorem \ref{thm:main-thm}]
The calculation of the $\ell^{2}$-Betti numbers follows from applying Lemmas \ref{lem:L2-injectivity} and \ref{lem:L2-union} to the group $\Gamma_n$ which is defined as the directed union $\bigcup_{m\in \N} F_{n+1}^{(m)}$. Finally, the fact that $n(\Gamma_n)=1$ follows immediately from construction.
\end{proof}

\bibliographystyle{alpha}
\bibliography{OsinThom}

@article{AntolinJaikin_HN,
  author   = {Antolín, Y. and Jaikin-Zapirain, A.},
  title    = {The {H}anna {N}eumann conjecture for surface groups},
  journal  = {Compositio Mathematica},
  fjournal = {Compositio Mathematica},
  volume   = {158},
  year     = {2022},
  number   = {9},
  pages    = {1850--1877},
  doi      = {10.1112/S0010437X22007709}
}

@article{Berberian_vonNeumannOre,
  author     = {Berberian, S. K.},
  title      = {The maximal ring of quotients of a finite von {N}eumann
                algebra},
  journal    = {Rocky Mountain J. Math.},
  fjournal   = {The Rocky Mountain Journal of Mathematics},
  volume     = {12},
  year       = {1982},
  number     = {1},
  pages      = {149--164},
  issn       = {0035-7596,1945-3795},
  mrclass    = {16A08 (16A30 46L10)},
  mrnumber   = {649748},
  mrreviewer = {David\ Handelman},
  doi        = {10.1216/RMJ-1982-12-1-149},
  url        = {https://doi.org/10.1216/RMJ-1982-12-1-149}
}

@incollection{BerrickHillman2008,
  author    = {Berrick, A. J. and Hillman, Jonathan A.},
  title     = {The {Whitehead} conjecture and {$L^2$}-{Betti} numbers},
  booktitle = {Guido's Book of Conjectures: A Gift to Guido Mislin on the
               Occasion of His Retirement from ETHZ, June 2006},
  editor    = {Chatterji, Indira},
  series    = {Monographies de L'Enseignement Math\'ematique},
  number    = {40},
  pages     = {35--37},
  publisher = {L'Enseignement Math\'ematique},
  address   = {Geneva},
  year      = {2008},
  isbn      = {978-2-940264-07-0}
}

@book{Brown1982,
  author    = {Brown, Kenneth S.},
  title     = {Cohomology of Groups},
  series    = {Graduate Texts in Mathematics},
  volume    = {87},
  publisher = {Springer-Verlag},
  address   = {New York},
  year      = {1982},
  doi       = {10.1007/978-1-4684-9327-6}
}

@article{Chen2026,
  author  = {Chen, Lvzhou},
  title   = {The {Kervaire} conjecture and the minimal complexity of surfaces},
  journal = {Transactions of the American Mathematical Society},
  volume  = {379},
  number  = {1},
  pages   = {587--626},
  year    = {2026},
  doi     = {10.1090/tran/9490},
  eprint  = {2302.09811},
  archiveprefix = {arXiv},
  primaryclass  = {math.GR}
}

@misc{ChenLodha2025,
  author        = {Chen, Lvzhou and Lodha, Yash},
  title         = {The {Wiegold} problem and free products of left-orderable groups},
  year          = {2025},
  eprint        = {2510.26073},
  archiveprefix = {arXiv},
  primaryclass  = {math.GR},
  note          = {arXiv:2510.26073v2}
}

@article{Howie1983,
  author  = {Howie, James},
  title   = {Some remarks on a problem of {J. H. C. Whitehead}},
  journal = {Topology},
  volume  = {22},
  number  = {4},
  year    = {1983},
  pages   = {475--485},
  doi     = {10.1016/0040-9383(83)90038-1}
}

@article{Klyachko1993,
  author  = {Klyachko, Anton A.},
  title   = {A funny property of sphere and equations over groups},
  journal = {Communications in Algebra},
  volume  = {21},
  number  = {7},
  pages   = {2555--2575},
  year    = {1993},
  doi     = {10.1080/00927879308824692}
}

@article{Levin1962,
  author  = {Levin, Frank},
  title   = {Solutions of equations over groups},
  journal = {Bulletin of the American Mathematical Society},
  volume  = {68},
  pages   = {603--604},
  year    = {1962},
  doi     = {10.1090/S0002-9904-1962-10868-4}
}

@article{Linnell1993,
  author  = {Linnell, Peter A.},
  title   = {Division rings and group von {Neumann} algebras},
  journal = {Forum Mathematicum},
  volume  = {5},
  number  = {6},
  pages   = {561--576},
  year    = {1993},
  doi     = {10.1515/form.1993.5.561},
  mrnumber = {1242889}
}

@book{Luck2002,
  author    = {L{\"u}ck, Wolfgang},
  title     = {{$L^2$}-Invariants: Theory and Applications to Geometry and
               {$K$}-Theory},
  series    = {Ergebnisse der Mathematik und ihrer Grenzgebiete. 3. Folge},
  volume    = {44},
  publisher = {Springer-Verlag},
  address   = {Berlin},
  year      = {2002},
  doi       = {10.1007/978-3-662-04687-6},
  mrnumber  = {1926649}
}

@article{OsinThom2013,
  author  = {Osin, Denis and Thom, Andreas},
  title   = {Normal generation and {$\ell^2$}-{Betti} numbers of groups},
  journal = {Mathematische Annalen},
  volume  = {355},
  number  = {4},
  pages   = {1331--1347},
  year    = {2013},
  doi     = {10.1007/s00208-012-0828-7},
  eprint  = {1108.2411},
  archiveprefix = {arXiv},
  primaryclass  = {math.GR},
  mrnumber = {3037017}
}

\end{document}